\begin{document}
\newtheorem{theorem}{Theorem}[section]
\newtheorem{lemma}[theorem]{Lemma}
\newtheorem{definition}[theorem]{Definition}
\newtheorem{conjecture}[theorem]{Conjecture}
\newtheorem{proposition}[theorem]{Proposition}
\newtheorem{claim}[theorem]{Claim}
\newtheorem{algorithm}[theorem]{Algorithm}
\newtheorem{corollary}[theorem]{Corollary}
\newtheorem{observation}[theorem]{Observation}
\newtheorem{problem}[theorem]{Open Problem}
\newcommand{\noin}{\noindent}
\newcommand{\ind}{\indent}
\newcommand{\om}{\omega}
\newcommand{\pp}{\mathcal P}
\newcommand{\AC}{\mathcal A \mathcal C}
\newcommand{\bAC}{\overline{\AC}}
\newcommand{\ppp}{\mathfrak P}
\newcommand{\N}{{\mathbb N}}
\newcommand{\LL}{\mathbb{L}}
\newcommand{\R}{{\mathbb R}}
\newcommand{\E}{\mathbb E}
\newcommand{\Prob}{\mathbb{P}}
\newcommand{\eps}{\varepsilon}

\newcommand{\Ss}{{\mathcal S}}
\newcommand{\Nn}{{\mathcal N}}

\newcommand{\ceil}[1]{\left \lceil #1 \right \rceil}
\newcommand{\floor}[1]{\left \lfloor #1 \right \rfloor}
\newcommand{\size}[1]{\left \vert #1 \right \vert}
\newcommand{\dist}{\mathrm{dist}}

\title{A note on the acquaintance time of random graphs}

\author{William B. Kinnersley}
\address{Department of Mathematics, Ryerson University, Toronto, ON, Canada, M5B 2K3}
\email{\tt wkinners@ryerson.ca}

\author{Dieter Mitsche}
\address{Universit\'{e} de Nice Sophia-Antipolis, Laboratoire J.A. Dieudonn\'{e}, Parc Valrose, 06108 Nice cedex 02}
\email{\texttt{dmitsche@unice.fr}}

\author{Pawe\l{} Pra\l{}at}
\address{Department of Mathematics, Ryerson University, Toronto, ON, Canada}
\email{\tt pralat@ryerson.ca}

\keywords{random graphs, vertex-pursuit games, acquaintance time}
\thanks{The authors gratefully acknowledge support from NSERC and Ryerson University}
\subjclass{05C80, 05C57, 68R10}

\begin{abstract}
In this short note, we prove a conjecture of Benjamini, Shinkar, and Tsur on the acquaintance time $\AC(G)$ of a random graph $G \in G(n,p)$. It is shown that asymptotically almost surely $\AC(G) = O(\log n / p)$ for $G \in G(n,p)$, provided that $pn - \log n - \log \log n \to \infty$ (that is, above the threshold for Hamiltonicity). Moreover, we show a matching lower bound for dense random graphs, which also implies that asymptotically almost surely $K_n$ cannot be covered with $o(\log n / p)$ copies of a random graph $G \in G(n,p)$, provided that $pn > n^{1/2+\eps}$ and $p < 1-\eps$ for some $\eps>0$. We conclude the paper with a small improvement on the general upper bound showing that for any $n$-vertex graph $G$, we have $\AC(G) = O(n^2/\log n )$.
\end{abstract}

\maketitle

\section{Introduction}\label{sec:intro}

In this paper, we study the following graph process, which was recently introduced by Benjamini, Shinkar, and Tsur~\cite{bst}. Let $G=(V,E)$ be a finite connected graph. We start the process by placing one \emph{agent} on each vertex of $G$. Every pair of agents sharing an edge is declared to be \emph{acquainted}, and remains so throughout the process. In each round of the process, we choose some matching $M$ in $G$.  ($M$ need not be maximal; perhaps it is a single edge.) For each edge of $M$, we swap the agents occupying its endpoints, which may cause more agents to become acquainted. The \emph{acquaintance time} of $G$, denoted by $\AC(G)$, is the minimum number of rounds required for all agents to become acquainted with one another.

It is clear that 
\begin{equation}\label{eq:trivial_lower}
\AC(G) \ge \frac {{|V| \choose 2}}{|E|} - 1, 
\end{equation}
since $|E|$ pairs are acquainted initially, and at most $|E|$ new pairs become acquainted in each round. In~\cite{bst}, it was shown that always $\AC(G) = O(\frac{n^2}{\log n / \log \log n})$, where $n = |V|$.  Moreover, for all functions $f : \N \rightarrow \N$ with $1 \le f(n) \le n^{1.5}$, the authors constructed families $\{G_n\}$ of graphs with $\size{V(G_n)} = n$ for all $n$ such that $\AC(G_n) = \Theta(f_n)$. The problem is similar in flavour to the problems of Routing Permutations on Graphs via Matchings~\cite{ACG94}, Gossiping and Broadcasting~\cite{HHL88}, and Target Set Selection~\cite{KKT03, Che09, Rei12}.

\bigskip

In this paper, we consider the acquaintance time of binomial random graphs. The \emph{random graph} $G(n,p)$ consists of the probability space $(\Omega, \mathcal{F}, \Prob)$, where $\Omega$ is the set of all graphs with vertex set $[n]=\{1,2,\dots,n\}$, $\mathcal{F}$ is the family of all subsets of $\Omega$, and for every $G \in \Omega$,
$$
\Prob(G) = p^{|E(G)|} (1-p)^{{n \choose 2} - |E(G)|} \,.
$$
This space may be viewed as the set of outcomes of ${n \choose 2}$ independent coin flips, one for each pair $(u,v)$ of vertices, where the probability of success (that is, adding edge $uv$) is $p.$ Note that $p=p(n)$ may tend to zero as $n$ tends to infinity. All asymptotics throughout are as $n \rightarrow \infty $ (we emphasize that the notations $o(\cdot)$ and $O(\cdot)$ refer to functions of $n$, not necessarily positive, whose growth is bounded). We say that an event in a probability space holds \emph{asymptotically almost surely} (or \emph{a.a.s.}) if the probability that it holds tends to $1$ as $n$ goes to infinity. 

For constant $p$, observe that $\log_{\frac{1}{1-p}}n=\Theta(\log n)$, but for $p=o(1)$ we have
$$
\log_{\frac{1}{1-p}}n = \frac {\log n}{-\log (1-p)} = (1+o(1)) \frac {\log n}{p}.
$$

\bigskip

Let $G \in G(n,p)$ with $p = p(n) \ge (1+\eps) \log n / n$ for some $\eps > 0$. (Recall that $\AC(G)$ is defined only for connected graphs, and $\log n / n$ is the threshold for connectivity in $G(n,p)$---see, for example,~\cite{bol, JLR} for more.) Since a.a.s.\ $|E(G)| = (1+o(1)) {n \choose 2} p$, it follows immediately from the trivial lower bound~(\ref{eq:trivial_lower}) that a.a.s.\ $\AC(G) = \Omega(1/p)$. On the other hand, it is known that a.a.s.\ $G$ has a Hamiltonian path, which implies that a.a.s.\ $\AC(G) = O(n)$ (see~\cite{bst} or Lemma~\ref{lem:hamilton} below). Despite the fact that no non-trivial upper bound on $\AC(G)$ was known, it was conjectured in~\cite{bst} that a.a.s.\ $\AC(G) = O(\text{poly} \log(n) / p)$. We confirm this conjecture.

\begin{theorem}\label{thm:upper_gnp}
Let $\eps > 0$ and $(1+\eps) \log n / n \le p=p(n) \le 1-\eps$. For $G \in G(n,p)$, a.a.s.
$$
\AC(G) = O \left( \frac {\log n}{p} \right).
$$
\end{theorem}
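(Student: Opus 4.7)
The plan is to locate a Hamiltonian cycle in $G$ and rotate the agents around it using the two perfect matchings of the cycle, relying on the edges of $G$ outside the cycle to guarantee that every pair of agents eventually sits on an edge.

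First I would apply two-round exposure. Choose $p_1 = (1+\eps/2) \log n / n$ and define $p_2$ by $(1-p_1)(1-p_2) = 1-p$; a direct calculation shows $p_2 = \Omega(p)$ uniformly across the range of $p$ in the theorem. Sampling $G_1 \sim G(n, p_1)$ and $G_2 \sim G(n, p_2)$ independently yields $G_1 \cup G_2 \sim G(n, p)$. Since $p_1$ lies above the Hamiltonicity threshold, a.a.s.\ $G_1$ contains a Hamiltonian cycle $H = u_1 u_2 \cdots u_n u_1$, and crucially $G_2$ is independent of $H$.

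Assume $n$ is even (the odd case requires a minor adjustment with an extra matching). Let $M_1 = \{u_{2k-1}u_{2k}\}_{k=1}^{n/2}$ and $M_2 = \{u_{2k}u_{2k+1}\}_{k=1}^{n/2}$ be the two perfect matchings of $H$, and alternate them for $T = \lceil C \log n / p \rceil$ rounds, where $C = C(\eps)$ is a large constant. A direct check shows that each agent then shifts by exactly one position along $H$ every round, with agents at odd-indexed starting vertices moving in one direction around the cycle and those at even-indexed starting vertices in the opposite direction. For any pair of agents $a_i, a_j$, the ordered positions $(P_i(t), P_j(t))$ for $0 \le t < T$ are therefore pairwise distinct whenever $T \le n$ (since $P_i(t)$ alone ranges over $T$ distinct vertices), yielding at least $T/2$ distinct unordered vertex pairs visited by the pair. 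Since $G_2$ is independent of $H$, each such pair is an edge of $G_2$ independently with probability $p_2$, so the probability that $a_i$ and $a_j$ are never placed on an edge during the $T$ rounds is at most $(1-p_2)^{T/2} \le \exp(-p_2 T/2)$. Choosing $C$ so that $p_2 T/2 \ge 3 \log n$ and applying a union bound over the $\binom{n}{2}$ agent pairs gives $\AC(G) \le T = O(\log n / p)$ a.a.s.

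The main technical obstacle I anticipate is the combinatorial verification that the simple alternating rotation really does deliver $\Omega(T)$ distinct unordered vertex pairs for every agent pair. A short case analysis (same- vs.\ opposite-direction pairs, plus the mildly degenerate case $|j-i| = n/2$ and the odd-$n$ adjustment) is required, though at heart it reduces to the remark that each unordered pair corresponds to at most two ordered pairs. The other ingredients --- two-round exposure, Hamiltonicity of $G_1$, and the union bound --- are standard, but one should take care that the choice of $p_1$ keeps $p_2 = \Omega(p)$ uniformly down to $p = (1+\eps)\log n / n$.
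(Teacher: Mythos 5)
Your high-level architecture (two-round exposure, a.a.s.\ Hamiltonicity of $G_1$, independence of $G_2$ from the chosen Hamiltonian structure, union bound over agent pairs) is the same as the paper's, but replacing the paper's strategy by a single global rotation around a Hamiltonian cycle creates a real gap at the sparse end of the range. Your analysis needs two things simultaneously: $T \le n$ (so that the ordered position pairs are distinct and you get $T/2$ distinct unordered vertex pairs) and $p_2 T/2 \ge 3\log n$, i.e.\ $T \ge 6\log n/p_2$. Since $p_2 \le p$ and the theorem must cover $p$ as small as $(1+\eps)\log n/n$, the second condition forces $T \ge 6n/(1+\eps) > n$ whenever $\eps$ is small, so the two requirements are incompatible exactly in the regime $p = \Theta(\log n/n)$. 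This is not merely a slack analysis. The rotation has period $n$, so a fixed pair of agents occupies at most $n$ distinct vertex pairs no matter how long you run; worse, all same-direction pairs whose starting indices differ by a fixed (even) amount $d$ share one orbit $\{\{u_i,u_{i+d}\}\}_i$ of at most $n$ vertex pairs, none of which is a cycle edge. With $p_2 n = \Theta(\eps\log n)$ the probability that such an orbit contains no $G_2$-edge is roughly $n^{-\eps/2}$, and these events are independent over the $\approx n/2$ values of $d$, so for small $\eps$ a.a.s.\ many orbits are entirely $G_2$-free and the corresponding agent pairs can never become acquainted through the edges your independence argument is allowed to use. (Opposite-direction pairs are fine --- they pass each other on the cycle --- but the same-direction pairs sink the argument.) Your proof is correct, and essentially equivalent to the paper's, once $p \ge C(\eps)\log n/n$ for a sufficiently large constant $C(\eps)$, but it does not cover the full claimed range.

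The paper circumvents exactly this obstacle by working on a Hamiltonian \emph{path} cut into vertex-disjoint segments (``teams'') of length $k = 2.5\log_{1/(1-p_2)}n = \Theta(\log n/p)$, and running inside each segment the back-and-forth strategy of Lemma~\ref{lem:hamilton}, in which agents bounce at the segment ends. The bouncing acquaints every within-team pair \emph{deterministically} (each agent physically passes every other agent of its team), so randomness is needed only for cross-team pairs; such a pair occupies $k$ distinct vertex pairs because the two segments are disjoint, giving failure probability $(1-p_2)^k = n^{-2.5}$ with no upper constraint on $k$ relative to $n$ (if $k \ge n$ there is a single team and everything is deterministic in $2n = O(\log n/p)$ rounds). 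To repair your proof you need some analogous device --- e.g.\ localizing the motion so that ``problematic'' pairs are acquainted deterministically rather than probabilistically --- since near the threshold the cycle rotation simply does not offer each pair enough independent chances.
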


 In fact, note the following: whenever $G_2$ is a subgraph of $G_1$ on the same vertex set,  $\AC(G_1) \le \AC(G_2)$, since the agents in $G_1$ have more edges to use. Hence, for any $p \ge 0.99$ (possibly $p \to 1$) and $G_1 \in G(n,p)$, we have that $\AC(G_1) \le \AC(G_2)$, where $G_2 \in G(n,0.99)$. Since a.a.s.\  $\AC(G_2) = O(\log n)$, a.a.s.\ $\AC(G_1) = O(\log n)$, and so the condition $p < 1-\eps$ in the theorem can be eliminated. Clearly, for denser graphs, this upper bound might not be tight; in particular, for the extreme case $p=1$, we trivially have $\AC(G_2)=\AC(K_n)=0$. Moreover, since the threshold for Hamiltonicity in $G(n,p)$ is $p=(\log n+\log \log n)/n$ (see, for example,~\cite{bol}), and for a Hamiltonian graph we have $\AC(G)=O(n)$, it follows that a.a.s.\ $\AC(G) = O(n)$, provided that $pn - \log n -\log \log n \to \infty$. So the desired bound for the acquaintance time holds at the time a random graph becomes Hamiltonian. We get the following corollary. 

\begin{corollary}
Suppose that $p=p(n)$ is such that $pn - \log n -\log \log n \to \infty$. For $G \in G(n,p)$, a.a.s.
$$
\AC(G) = O \left( \frac {\log n}{p} \right).
$$
\end{corollary}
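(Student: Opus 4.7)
The plan is to reduce the Corollary to Theorem~\ref{thm:upper_gnp} by splitting the range of $p = p(n)$ into three regimes and handling each one separately. Fix $\eps = 1/4$, say. The ``interior'' regime $(1+\eps) \log n / n \le p \le 1-\eps$ is covered directly by Theorem~\ref{thm:upper_gnp}, which already gives $\AC(G) = O(\log n / p)$ a.a.s.

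For the ``dense'' regime $p > 1-\eps$, I would use the monotonicity observation noted just after Theorem~\ref{thm:upper_gnp}: adding edges on the same vertex set can only decrease the acquaintance time. By the standard coupling of binomial random graphs (flipping independent coins for each pair), $G \in G(n,p)$ contains a spanning subgraph $G' \in G(n, 3/4)$, so $\AC(G) \le \AC(G')$. Applying Theorem~\ref{thm:upper_gnp} to $G'$, whose parameter $3/4$ lies in the interior regime, yields a.a.s.\ $\AC(G') = O(\log n)$; since $p$ is bounded away from zero, this is $O(\log n / p)$.

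For the ``sparse'' regime, where $pn - \log n - \log \log n \to \infty$ but $p < (1+\eps) \log n / n$, Theorem~\ref{thm:upper_gnp} does not apply. However, the hypothesis places $p$ above the Hamiltonicity threshold in $G(n,p)$, so a.a.s.\ $G$ contains a Hamilton cycle, and Lemma~\ref{lem:hamilton} then gives $\AC(G) = O(n)$. In this regime $p \le (1+\eps) \log n / n$ forces $\log n / p \ge n/(1+\eps) = \Omega(n)$, so $O(n) = O(\log n / p)$, as required.

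I do not anticipate any serious obstacle: both the monotonicity reduction for the dense range and the Hamiltonicity argument for the sparse range are already sketched in the discussion immediately following Theorem~\ref{thm:upper_gnp}, so the proof is essentially a bookkeeping exercise to verify that every $p$ satisfying the hypothesis falls into one of the three regimes above. The only mild subtlety is the sparse regime, where one must check that the ``trivial'' bound $O(n)$ coming from Hamiltonicity really is of the desired order $O(\log n / p)$, which it is precisely because we are so close to the connectivity/Hamiltonicity threshold.
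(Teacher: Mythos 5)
Your proposal is correct and follows essentially the same route as the paper, which derives the corollary from the discussion immediately preceding it: Theorem~\ref{thm:upper_gnp} in the middle range, monotonicity of $\AC$ under edge addition for $p$ near $1$, and the Hamiltonicity threshold together with Lemma~\ref{lem:hamilton} (giving $\AC(G)=O(n)=O(\log n/p)$) in the sparse range. The three-regime bookkeeping you describe is exactly the intended argument.
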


\bigskip

In hopes of improving the trivial lower bound on the acquaintance time of $G(n,p)$, we consider a variant of the original process.  Suppose that each agent has a helicopter and can, on each round, move to any vertex she wants. (We retain the requirement that no two agents can occupy a single vertex simultaneously.)  In other words, in every step of the process, the agents choose some permutation $\pi$ of the vertices, and the agent occupying vertex $v$ flies directly to vertex $\pi(v)$, regardless of whether there is an edge or even a path between $v$ and $\pi(v)$. (In fact, it is no longer necessary that the graph be connected.) Let $\bAC(G)$ be the counterpart of $\AC(G)$ under this new model, that is, the minimum number of rounds required for all agents to become acquainted with one another. Since helicopters make it easier for agents to get acquainted, we immediately get that for every graph $G$, 
\begin{equation}\label{eq:bACvsAC}
\bAC(G) \le \AC(G).
\end{equation}
On the other hand, $\bAC(G)$ also represents the minimum number of copies of a graph $G$ needed to cover all edges of a complete graph of the same order. Thus inequality~(\ref{eq:trivial_lower}) can be strengthened to $\bAC(G) \ge {|V| \choose 2}/|E| - 1$.

We prove the following lower bound on $\bAC(G)$ (and hence on $\AC(G)$).  This result also implies that a.a.s.\ $K_n$ cannot be covered with $o(\log n / p)$ copies of a dense random graph $G \in G(n,p)$.

\begin{theorem}\label{thm:lower_gnp}
Let $\eps > 0$, $p=p(n) \ge n^{-1/2+\eps}$  and $p \le 1-\eps$. For $G \in G(n,p)$, a.a.s.
$$
\AC(G) \ge \bAC(G) \ge \frac {\eps}{2} \log_{1/(1-p)} n =  \Omega \left( \frac {\log n}{p} \right).
$$
\end{theorem}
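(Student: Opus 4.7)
The plan is a union bound over schedules, combined with Janson's inequality for each fixed schedule. Let $t = \lceil(\eps/2)\log_{1/(1-p)}n\rceil - 1$; it suffices to show that, a.a.s., no sequence of $t$ permutations $\sigma_1,\dots,\sigma_t$ of $[n]$ satisfies $\bigcup_{i} E(\sigma_i(G)) = E(K_n)$. For each pair $\{u,v\}\in\binom{[n]}{2}$, let
\[
T_{u,v} := \bigl\{\{\sigma_i^{-1}(u),\sigma_i^{-1}(v)\} : 1\le i\le t\bigr\}
\]
denote its \emph{target set}: the edges of $K_n$ whose presence in $E(G)$ would cover $\{u,v\}$. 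Since each $\sigma_i^{-1}$ is a bijection on $\binom{[n]}{2}$, every edge of $K_n$ lies in exactly $t$ target sets; equivalently, $\sum_{\{u,v\}<\{u',v'\}}|T_{u,v}\cap T_{u',v'}| = \binom{n}{2}\binom{t}{2}$.

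For each fixed schedule, I would apply Janson's inequality to the uncovered events $B_{u,v}=\{T_{u,v}\cap E(G)=\emptyset\}$. The mean $\mu := \sum \Prob[B_{u,v}] \ge \binom{n}{2}(1-p)^t = \Theta(n^{2-\eps/2})$ is immediate. The crux of the analysis is the upper bound on
\[
\Delta := \sum_{\substack{\{u,v\}<\{u',v'\}\\ T_{u,v}\cap T_{u',v'}\ne\emptyset}}\Prob[B_{u,v}\cap B_{u',v'}] = (1-p)^{2t}\sum_{k\ge 1}N_k(1-p)^{-k},
\]
where $N_k$ counts pairs of target sets with intersection of size $k$. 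Using the constraint $\sum_k kN_k = \binom{n}{2}\binom{t}{2}$ together with the observation that $(1-p)^{-k}/k$ is U-shaped on $\{1,\dots,t\}$ (attaining its maximum at the endpoints), a linear-programming argument yields $\Delta = O(\binom{n}{2}\binom{t}{2}(1-p)^{2t-1})$. Janson's inequality then bounds $\Prob[\text{schedule covers}] \le \exp(-\Omega(\mu^2/\Delta)) = \exp(-\Omega(n^2p^2/\log^2 n))$.

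To complete the argument, I union-bound over the $(n!)^t$ schedules (equivalently $(n!)^{t-1}$ after fixing $\sigma_1=\mathrm{id}$ by symmetry), which contributes a factor $\exp(O(tn\log n)) = \exp(O(n\log^2 n/p))$. The union bound succeeds provided $n^2p^2/\log^2 n$ grows faster than $n\log^2 n/p$, i.e., $np^3 \gg \log^4 n$, which is exactly what the hypothesis $p\ge n^{-1/2+\eps}$ is designed to deliver.

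The main obstacle will be the tight upper bound on $\Delta$: a naive estimate based on $|T_{u,v}\cup T_{u',v'}|\ge t+1$ for each intersecting pair yields $\Delta\le\binom{n}{2}\binom{t}{2}(1-p)^{t+1}$, losing a factor of roughly $n^{\eps/2}$ relative to the LP bound---and with it, the union bound. One must therefore retain the full dependence of $\Prob[B_{u,v}\cap B_{u',v'}]$ on $|T_{u,v}\cap T_{u',v'}|$ and carefully verify where the maximum of $(1-p)^{-k}/k$ is attained in the parameter range of interest; a small amount of extra care is also needed to handle the atypical schedules for which some target sets have fewer than $t$ distinct edges.
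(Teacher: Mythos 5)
Your overall architecture---fix a schedule of $t$ permutations, bound the probability that it covers $K_n$, and union-bound over all $(n!)^t$ schedules---is exactly the paper's, and your accounting of the union bound cost $\exp(O(tn\log n))$ is right. The gap is in the per-schedule estimate. Generalized Janson gives exponent $\Theta(\mu^2/\Delta)$, and your own LP analysis of $\Delta$ (which is essentially tight for generic schedules: $\Delta=\Theta\bigl(\binom{n}{2}\binom{t}{2}(1-p)^{2t}\bigr)$ when $p\le 1-\eps$, since then $(1-p)^{-1}=O(1)$) yields $\mu^2/\Delta=\Theta(n^2/t^2)=\Theta(n^2p^2/\log^2 n)$. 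You correctly derive that the union bound then requires $np^3\gg\log^4 n$, but your final sentence---that $p\ge n^{-1/2+\eps}$ delivers this---is false: it gives $np^3\ge n^{-1/2+3\eps}\to 0$ for $\eps<1/6$. The hypothesis delivers $np^2\ge n^{2\eps}\to\infty$, one power of $p$ short. So your argument only works for $p\gg n^{-1/3}\log^{4/3}n$ and does not cover the range $n^{-1/2+\eps}\le p\ll n^{-1/3}$ that the theorem claims. This is not a fixable constant; when $\Delta\gg\mu$ the generalized Janson exponent $\mu^2/(2\Delta)$ is simply far from the truth here.

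What is needed (and what the paper does) is a per-schedule bound with exponent $\Omega(\mu/t)=\Omega\bigl((1-p)^{t}\binom{n}{2}/t\bigr)$ rather than $\Omega(\mu^2/\Delta)$; note $\mu/t\gg\mu^2/\Delta$ in the problematic range. The key structural fact is the one you already isolated---each edge of $K_n$ lies in at most $t$ target sets---but it should be used greedily rather than fed into Janson: expose the edges of one target set $T_{u,v}$ at a time; if an edge of $G$ is found, discard every pair whose target set contains that edge (at most $t$ pairs per discovered edge, hence at most $t$ discards charged to the pair under investigation). This sequential exposure investigates at least $\binom{n}{2}/t$ pairs, each of which, conditionally on the history, is covered with probability at most $1-(1-p)^{t}$, giving
$$
\Prob[\text{schedule covers } K_n]\le\bigl(1-(1-p)^{t}\bigr)^{\binom{n}{2}/t}\le\exp\!\left(-(1-p)^{t}\binom{n}{2}/t\right),
$$
and $(1-p)^{t}\binom{n}{2}/t=\Theta(n^{2-\eps/2}p/\log n)\gg tn\log n$ precisely because $np^2\ge n^{2\eps}$. (Equivalently: greedily extract $\binom{n}{2}/t$ pairs with pairwise disjoint target sets and use genuine independence.) Replacing your Janson step with this argument repairs the proof; as written, the proposal does not establish the theorem in its stated range.
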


Theorem~\ref{thm:upper_gnp} and Theorem~\ref{thm:lower_gnp} together determine the order of growth for the acquaintance time of dense random graphs (in particular, random graphs with average degree at least $n^{1/2+\eps}$ for some $\eps > 0$). 

\begin{corollary}
Let $\eps > 0$, $p=p(n) \ge n^{-1/2+\eps}$  and $p \le 1-\eps$. For $G \in G(n,p)$, a.a.s.
$$
\bAC(G) = \Theta \Big( \AC(G) \Big) = \Theta \left( \frac {\log n}{p} \right).
$$
\end{corollary}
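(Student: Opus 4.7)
The plan is to assemble this corollary directly from the two preceding theorems together with the trivial domination $\bAC(G) \le \AC(G)$ from inequality~(\ref{eq:bACvsAC}). The hypothesis $p \ge n^{-1/2+\eps}$ comfortably exceeds the connectivity threshold $(1+\eps)\log n / n$ for all sufficiently large $n$, so the stronger density assumption implies both $p \ge (1+\eps)\log n / n$ and $p \le 1-\eps$, and Theorem~\ref{thm:upper_gnp} applies to give
$$
\AC(G) = O\!\left(\frac{\log n}{p}\right) \quad \text{a.a.s.}
$$
This supplies the upper half of the $\Theta$ bound.

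For the matching lower half, I would invoke Theorem~\ref{thm:lower_gnp}, whose hypotheses $p \ge n^{-1/2+\eps}$ and $p \le 1-\eps$ coincide with those of the corollary. This yields $\bAC(G) \ge (\eps/2)\log_{1/(1-p)} n$ a.a.s. Using the identity $\log_{1/(1-p)} n = (1+o(1))\log n / p$ recorded in the introduction (valid for both constant $p$ and $p = o(1)$), this rewrites as $\bAC(G) = \Omega(\log n / p)$ a.a.s.

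Chaining these three bounds together gives, a.a.s.,
$$
\Omega\!\left(\frac{\log n}{p}\right) \le \bAC(G) \le \AC(G) = O\!\left(\frac{\log n}{p}\right),
$$
so both $\bAC(G)$ and $\AC(G)$ are of order $\Theta(\log n / p)$, which proves the corollary. There is no real obstacle here, as the statement is a purely formal juxtaposition of Theorems~\ref{thm:upper_gnp} and~\ref{thm:lower_gnp}; the only point worth verifying is that the hypothesis $p \ge n^{-1/2+\eps}$ of the lower-bound theorem automatically implies the hypothesis $p \ge (1+\eps)\log n / n$ of the upper-bound theorem, so that both results are simultaneously available.
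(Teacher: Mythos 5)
Your proposal is correct and matches the paper's (implicit) argument exactly: the corollary is stated there without proof as the immediate conjunction of Theorem~\ref{thm:upper_gnp}, Theorem~\ref{thm:lower_gnp}, and inequality~(\ref{eq:bACvsAC}), with the same observation that $n^{-1/2+\eps} \ge (1+\eps)\log n/n$ for large $n$ and that $\log_{1/(1-p)} n = \Theta(\log n/p)$ when $p \le 1-\eps$. The only nitpick is that for constant $p$ the paper records only $\log_{1/(1-p)} n = \Theta(\log n)$ rather than the $(1+o(1))\log n/p$ asymptotic, but a $\Theta$-bound is all you need here.
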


The behaviours of $\AC(G)$ and $\bAC(G)$ for sparser random graphs remain undetermined.

We conclude the paper with a small improvement on the general upper bound, showing that for every $n$-vertex graph we have $\AC(G) = O(n^2/ \log n)$, a bound that is smaller than the previously known upper bound by a multiplicative factor of $\log \log n$.

\section{Proofs}

\subsection{Proof of Theorem~\ref{thm:upper_gnp}}

We will use the fact, observed in~\cite{bst}, that for any graph $G$ on $n$ vertices with a Hamiltonian path, we have $\AC(G)=O(n)$. We need a slightly stronger statement, so we provide a different argument.

\begin{lemma}\label{lem:hamilton}
Let $G$ be a graph on $n$ vertices.  If $G$ has a Hamiltonian path, then there exists a strategy ensuring that within $2n$ rounds every pair of agents gets acquainted (in particular, $\AC(G)=O(n)$) and, moreover, that every agent visits every vertex.
\end{lemma}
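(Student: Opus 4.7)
The plan is to use the Hamiltonian path $P = v_1 v_2 \cdots v_n$ itself as a scaffold and run the classical odd--even transposition pattern on $P$ for exactly $2n$ rounds: in every odd-numbered round use the matching $M_o = \{v_{2i-1} v_{2i} : 2i \le n\}$, and in every even-numbered round use the matching $M_e = \{v_{2i} v_{2i+1} : 2i + 1 \le n\}$. Each agent's position then changes by at most one vertex per round.

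The first main step is to describe each agent's trajectory explicitly. I would argue that if agent $a$ starts at $v_i$ with $i$ odd, she is picked up by $M_o$ in round $1$ and thereafter slides right, one vertex per round, until she reaches $v_n$ at round $n-i$; she then pauses for exactly one round, because $v_n$ is covered by only one of the two matchings; next she slides left all the way to $v_1$, pauses there for one round, and finally slides right back to $v_i$. The total duration is $(n-i) + 1 + (n-1) + 1 + (i-1) = 2n$ rounds, and along the way she visits every vertex of $P$. A symmetric description handles the case when $i$ is even (she moves left first). This immediately yields the ``every agent visits every vertex'' clause, and it supplies the key by-product that at time exactly $n$ agent $i$ sits at $v_{n+1-i}$, so the permutation of agents at time $n$ is the full reversal.

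The second main step deduces acquaintance from this reversal fact. Fix any two agents with initial positions $a < b$ and let $g_t = p_t(b) - p_t(a)$ be their signed gap at time $t$, where $p_t(\cdot)$ denotes position at time $t$. Then $(g_t)$ is a sequence of nonzero integers (positions are always distinct) satisfying $|g_{t+1} - g_t| \le 2$, with $g_0 = b - a > 0$ and $g_n = (n+1-b) - (n+1-a) = a - b < 0$. A nonzero-integer sequence with steps of size at most $2$ can change sign only by jumping from $+1$ directly to $-1$: indeed, if $t^*$ is the first round with $g_{t^*} < 0$, then $g_{t^*-1} \ge 1$ and $g_{t^*} \le -1$ together with $|g_{t^*} - g_{t^*-1}| \le 2$ force $g_{t^*-1} = 1$. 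At round $t^*-1 < n$ the two agents therefore occupy consecutive vertices of $P$, which are joined by an edge of $G$, so they become acquainted. Since the pair was arbitrary, every pair of agents is acquainted by round $n$, comfortably within the allotted $2n$ rounds.

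The main obstacle I anticipate is not conceptually deep, but rather a case split between $n$ even and $n$ odd: in one case the one-round pauses at $v_1$ and $v_n$ occur in even-indexed rounds, and in the other they occur in odd-indexed rounds, so the trajectory description (and hence the reversal at time $n$) must be verified separately in each case. Once this bookkeeping is carried out against the explicit definitions of $M_o$ and $M_e$, the step-size estimate and the intermediate-value argument above deliver the lemma.
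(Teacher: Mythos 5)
Your proposal is correct and uses exactly the same strategy as the paper: alternate between the odd-indexed and even-indexed edges of the Hamiltonian path, so that each agent sweeps the whole path in $2n$ rounds. Your signed-gap/intermediate-value argument (the gap changes by at most $2$ per round, never vanishes, and flips sign by time $n$ because the configuration at time $n$ is the reversal, so it must pass through $+1$) is a careful formalization of the step the paper dispatches with the informal remark that each agent ``has necessarily passed by every other agent.''
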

\begin{proof}
Index the vertices of $G$ as $v_1, v_2, \ldots, v_n$ so that $P=(v_1, v_2, \ldots, v_n)$ is a Hamiltonian path. For $1 \le i \le n-1$, let $e_i = v_iv_{i+1}$. On odd-numbered rounds, swap agents on all odd-indexed edges; on even-numbered rounds, swap agents on all even-indexed edges.  This has the following effect.  Agents that begin on odd-indexed vertices move ``forward'' in the vertex ordering, pause for one round at $v_n$, move ``backward'', pause again at $v_1$, and repeat; agents that begin on even-indexed vertices move backward, pause at $v_1$, move forward, pause at $v_n$, and repeat.  After $2n$ rounds, each agent has traversed the entire path; in doing so, she has necessarily passed by every other agent.
\end{proof}

We are now ready to prove Theorem~\ref{thm:upper_gnp}. 
\begin{proof}[Proof of Theorem~\ref{thm:upper_gnp}]
In order to avoid technical problems with events not being independent, we use a classic technique known as \emph{two-round exposure}. The observation is that a random graph $G \in G(n,p)$ can be viewed as a union of two independently generated random graphs $G_1 \in G(n,p_1)$ and $G_2 \in G(n,p_2)$, provided that $p=p_1 + p_2 - p_1 p_2$ (see, for example,~\cite{bol, JLR} for more information). 

Let $p_1 := (1+\eps/2) \log n / n$ and
$$
p_2 := \frac {p-p_1}{1-p_1} \ge p-p_1 \ge \frac {\eps/2}{1+\eps} p
$$
(recall that $p \ge (1+\eps) \log n / n$). Fix $G_1 \in G(n,p_1)$ and $G_2 \in G(n,p_2)$, with $V(G_1) = V(G_2) = \{v_1, v_2, \ldots, v_n\}$, and view $G$ as the union of $G_1$ and $G_2$.  It is known that a.a.s.\ $G_1$ has a Hamiltonian path (as usual, see~\cite{bol, JLR} for more). Hence we may suppose that $P=(v_1, v_2, \ldots, v_n)$ is a Hamiltonian path of $G_1$ (and thus also of $G$).

Now let $k = k(n) = 2.5 \log_{1/(1-p_2)} n$. We partition the path $P$ into many paths, each on $k$ vertices. This partition also divides the agents into $\ceil{n/k}$ teams, each team consisting of $k$ agents (except for the ``last'' team, which may be smaller). Every team performs (independently and simultaneously) the strategy from Lemma~\ref{lem:hamilton}. It follows that the length of the full process is at most $2k = 5 \log_{1/(1-p_2)} n$, which is asymptotic to
$$
5 \frac {\log n}{p_2} \le 10 \frac {(1+\eps)}{\eps} \frac {\log n}{p} = O\left( \frac {\log n}{p} \right)
$$
provided that $p = o(1)$; if instead $p = \Omega(1)$, then the number of rounds needed is clearly $O(\log n)$. Moreover, every pair of agents from the same team gets acquainted.

It remains to show that a.a.s.\ every pair of agents from different teams gets acquainted. Let us focus on one such pair. It follows from Lemma~\ref{lem:hamilton} that each agent, excepting those in the ``last'' team, visits $k$ distinct vertices.  Since the agents belong to different teams, at least one belongs to a team of size $k$, so the two agents occupy at least $k$ distinct pairs of vertices during the process. Considering only those edges in $G_2$, the probability that the two agents never got acquainted is at most
$$
(1-p_2)^{k} = o(n^{-2}).
$$
Since there are at most $n \choose 2$ pairs of agents, the result holds by the union bound.
\end{proof}

We now turn to Theorem~\ref{thm:lower_gnp} and the helicopter variant of the acquaintance process.

\subsection{Proof of Theorem~\ref{thm:lower_gnp}}

The first inequality in the statement of the theorem is~\eqref{eq:bACvsAC}. It remains to show the desired lower bound for $\bAC(G)$.

\begin{proof}[Proof of Theorem~\ref{thm:lower_gnp}]
Let $a_1, a_2, \ldots, a_n$ denote the $n$ agents, and let $A = \{a_1, a_2, \ldots, a_n\}$. Take $k = \frac {\eps}{2} \log_{1/(1-p)} n$ and fix $k$ bijections $\pi_i : A \to V(G)$, for $i \in \{0, 1, \ldots k-1\}$.  This corresponds to fixing a $(k-1)$-round strategy for the agents; in particular, agent $a_j$ occupies vertex $\pi_i(a_j)$ in round $i$. We aim to show that at the end of the process (that is, after $k-1$ rounds) the probability that all agents are acquainted is only $o((1/n!)^k)$. This completes the proof: the number of choices for $\pi_0, \pi_1, \ldots, \pi_{k-1}$ is $(n!)^k$, so by the union bound, a.a.s.\ no strategy makes all pairs of agents acquainted.

To estimate the probability in question, we consider the following analysis, which iteratively exposes edges of a random graph $G \in G(n,p)$. For any pair $r = \{a_x, a_y\}$ of agents, we consider all pairs of vertices visited by this pair of agents throughout the process:
$$
S(r) = \{ \{\pi_i(a_x), \pi_i(a_y)\} : i \in \{0, 1, \ldots k-1\} \}.
$$
Clearly, $1 \le |S(r)| \le k$. Take any pair $r_1$ of agents and expose the edges of $G$ in $S(r_1)$, one by one.  If we expose all of $S(r_1)$ without discovering an edge, then we discard $r_1$ and proceed.  (In fact we have just learned that the pair $r_1$ never gets acquainted, so we could choose to halt the process immediately.  However, to simplify the analysis, we continue normally.)  If instead we do discover some edge $e$ of $G$, then we discard all pairs of agents that ever occupy this edge (that is, we discard all pairs $r$ such that $e \in S(r)$).  In either case, we shift our attention to another pair $r_2$ of agents (chosen arbitrarily from among the pairs not yet discarded). It may happen that some of the pairs of vertices in $S(r_2)$ have already been exposed, but the analysis guarantees that no edge has yet been discovered. Let $T(r_2) \subseteq S(r_2)$ be the set of edges in $S(r_2)$ not yet exposed. As before, we expose these edges one by one, until either we discover an edge or we run out of edges to expose. If an edge is discovered, then we again discard all pairs that ever occupy that edge. 

We continue this process until all available pairs of agents have been investigated. Since one pair of agents can force us to discard at most $k$ pairs (including the original pair), the process investigates at least ${n \choose 2} / k$ pairs of agents. Moreover, among these pairs, the probability $\Prob(r_t)$ that pair $r_t$ gets acquainted is
$$
\Prob(r_t) = 1-(1-p)^{|T(r_t)|} \le 1-(1-p)^{|S(r_t)|} \le 1-(1-p)^{k}.
$$
Hence, the probability that all pairs get acquainted is at most
\begin{eqnarray*}
\prod_{t=1}^{{n \choose 2} / k} \Prob(r_t) &\le& \left( 1-(1-p)^{k} \right)^{{n \choose 2} / k} \le \exp \left( -(1-p)^{k} {n \choose 2} / k\right) \\
&\le& \exp \left( -n^{-\eps/2} {n \choose 2} 3 n^{-1/2+\eps/2} \right) \le \exp \left( -n^{3/2}  \right)  \\
&\le& \exp \left( -n^{1+\eps/2} k \right) = o \left(\exp \left( - k n \log n \right) \right) \\
&=& o \left( \left(1/n! \right)^k \right),
\end{eqnarray*}
since $k = \Theta(\log n / p) \le n^{1/2-\eps/2}/3$.
As mentioned earlier, it follows that a.a.s.\ $\bAC(G) \ge k$, and the proof is finished.
\end{proof}

\section{General Upper Bound}

We conclude this note with a small improvement to another result of Benjamini, Shinkar, and Tsur.  They proved the following (\cite{bst}, Theorem 5.5):
\begin{theorem}[\cite{bst}]\label{thm:bst_upper}
For every $n$-vertex graph $G$, we have that $\AC(G) = O\left (\frac{n^2}{\log n / \log \log n} \right )$.
\end{theorem}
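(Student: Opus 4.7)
Pick a small constant $c>0$ and set $k=\lfloor c\log n/\log\log n\rfloor$. The plan is to combine a structural dichotomy with a ``conveyor-belt'' strategy. First I would establish the following structural lemma: every connected $n$-vertex graph $G$ contains, as a subgraph, either a path on $k$ vertices or a star $K_{1,k}$. Fix any spanning tree $T$ of $G$ and root it; if every vertex of $T$ had degree less than $k$ and $T$ had depth less than $k$, then $|V(T)|\le k^{k}=e^{k\log k}=n^{c+o(1)}$, contradicting $|V(T)|=n$ for large $n$ since $c<1$. Hence $T$ contains either a root-to-leaf path of length $\ge k$ or a vertex of degree $\ge k$, yielding the required subgraph $H$ of $G$, either a $k$-vertex path or a star $K_{1,k}$. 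In either case $H$ has a Hamiltonian path, so by Lemma~\ref{lem:hamilton}, whenever $k$ agents simultaneously occupy $V(H)$ we can make them pairwise acquainted in $O(k)$ rounds, during which each of them visits every vertex of $H$.

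The strategy is to run $H$ as a conveyor belt. Using a spanning tree of $G$, I would continually shuttle fresh agents from the bulk of the graph into $H$ while ejecting the current residents, so that every agent visits $H$ on the order of $n/k$ times during the process. Each visit lasts $\Theta(k)$ rounds and, by the Hamiltonian-path strategy, produces $\Theta(k)$ new acquaintances with the other agents currently in $H$; thus each agent meets all $n-1$ others in $\Theta(n/k)$ visits. The total time spent by agents inside $H$ is therefore $n\cdot(n/k)\cdot k=n^{2}$ agent-rounds; since $H$ can host only $k$ agents simultaneously, the total number of rounds is $O(n^{2}/k)=O(n^{2}\log\log n/\log n)$, which matches the stated bound.

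The main obstacle is synchronising the shuttling with the interior work. Under the matching constraint one cannot move many agents at once unless the moves are along disjoint tree edges, and every pair of agents must actually coincide inside $H$ at some point, not merely visit it at different times. The key technical device is to set up ``pipelines'' of agents marching along disjoint tree paths toward and away from $H$ in parallel with the Hamiltonian-path work inside $H$, and to schedule the batches via a simple combinatorial design so that every pair of agents is together in $H$ at some phase. The star case is slightly more delicate because traffic through the centre is bottlenecked; there one processes agents through the centre in a round-robin so that the $k-1$ leaves effectively act as parallel slots. Carrying out this amortisation so that the shuttling cost is absorbed into the $O(n^{2}/k)$ budget is the heart of the proof.
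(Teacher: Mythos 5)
A point of orientation first: the paper does not actually prove this statement---it is quoted from~\cite{bst}---and instead proves the stronger bound $\AC(G) = O(n^2/\log n)$ (Theorem~\ref{thm:upper_improved}) by replacing your path-or-star dichotomy with a single structure that covers both cases at once: a caterpillar on at least $\log_2 n$ vertices (Proposition~\ref{prop:subcat_logn}). Your structural dichotomy and the ``pair up teams and bring each pair onto $H$'' design are sound in outline and are essentially the route of the original argument, but two steps fail as written. The first is the claim that ``in either case $H$ has a Hamiltonian path'': the star $K_{1,k}$ has no Hamiltonian path once $k \ge 3$, since every edge meets the centre and a path therefore covers at most three of its vertices. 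So Lemma~\ref{lem:hamilton} cannot be invoked in the star case, and the assertion that each agent visits every vertex of $H$ is false there. The star case is still fine, but for a different reason: any matching of $K_{1,k}$ has only one edge, and a round-robin that cycles each resident through the centre acquaints $k+1$ agents in $k$ rounds, because the agent currently at the centre is adjacent to all others. You gesture at this at the end, but it contradicts, rather than supplements, the Hamiltonian-path step you actually rely on.

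The second and more serious gap is that the shuttling cost is never controlled. Your $H$ has only $k = \Theta(\log n/\log\log n)$ vertices, but in the long-path case the host tree may have diameter $\Theta(n)$; routing a team onto $H$ via Claim~\ref{claim:bst_route} then costs $\ell + 2(|S|-1) = \Omega(\dist)$ rounds where $\dist$ can be linear in $n$, and with $\Theta\left((n/k)^2\right)$ pairs of teams the total becomes $\Omega(n^3/k^2)$, far above the budget. You defer exactly this point to an unspecified ``pipelining'' amortisation that you yourself identify as the heart of the proof, so as written the proof is incomplete precisely where it is hardest. The standard repair---and the one the paper uses for its improvement---is to choose $H$ so that the diameter of the (spanning tree of the) graph is automatically $O(|V(H)|)$: take $H$ to be the longest path rather than an arbitrary $k$-vertex truncation of it (in the paper, the largest caterpillar, which contains the longest path). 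In a tree the diameter is realised by the longest path, so Claim~\ref{claim:bst_route} then routes each pair of teams in $O(|V(H)|)$ rounds, the interior work is $O(|V(H)|)$ by Lemma~\ref{lem:hamilton} (or the round-robin for the star, whose case forces diameter less than $k$ anyway), and the count $O\left(n^2/|V(H)|\right) \le O(n^2/k)$ goes through with no pipelining at all.
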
   
However, they ask whether in fact this bound can be improved to $O(n^{1.5})$.  While we are unable to resolve this question, we do provide the following minor improvement:
\begin{theorem}\label{thm:upper_improved}
For every $n$-vertex graph $G$, we have that $\AC(G) = O\left (\frac{n^2}{\log n} \right )$.
\end{theorem}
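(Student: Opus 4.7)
Proof plan for Theorem~\ref{thm:upper_improved}. The goal is to shave a $\log\log n$ factor from the Benjamini-Shinkar-Tsur bound $O(n^2 \log\log n/\log n)$. I would follow the overall framework of Theorem~\ref{thm:bst_upper} and pinpoint the single step where a $\log\log n$ factor is lost, sharpening it through a larger coordinating substructure.

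First, I would observe that it suffices to work on a suitable spanning subgraph $H$ of $G$, since $\AC(G) \le \AC(H)$ whenever $H$ is a spanning subgraph of $G$; a spanning tree together with any auxiliary edges one needs is a natural starting point. Second, I would identify in $G$ a coordinating substructure of size $k = \Omega(\log n)$ that can serve as a routing hub. The BST proof uses a substructure of size only $\Theta(\log n / \log\log n)$, obtained by a greedy/Ramsey-type construction; replacing this by a truly logarithmic structure is exactly what saves the extra factor. A natural candidate is a clique or independent set of size $k = \lceil \tfrac{1}{2} \log_2 n \rceil$ supplied by Ramsey's theorem applied to $G$, or alternatively a long path of length $k$ extracted from an appropriate spanning tree (either a root-to-leaf path, if the tree is deep, or a concatenation obtained from a heavy-path decomposition, if it is shallow but wide).

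Third, using this larger substructure, I would design a routing strategy that processes pairs of agents in batches. In each of the $O(n/k)$ batches, we spend $O(n)$ rounds moving agents past the hub in the style of Lemma~\ref{lem:hamilton}, so that every agent meets $\Omega(k)$ new partners during the batch. Summing over batches then yields $O(n \cdot n/k) = O(n^2/\log n)$ total rounds. Throughout, one must check that each round's swaps form a legitimate matching and that all $\binom{n}{2}$ pairs are in fact acquainted by the end.

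The main obstacle is making the routing work under the matching constraint while preserving the improved factor. Routing $\Omega(\log n)$ pairs through a hub per round requires a highly parallel matching that respects the internal structure of the hub. In the clique case this is straightforward, since edges inside the hub are abundant and all agents placed there are immediately pairwise acquainted. In the independent-set (or shallow-tree) case it is more delicate and may require an auxiliary argument, for example using the common neighborhood of the independent set to build a suitable bipartite matching, or invoking a Hall-type system of distinct representatives to run many parallel routes at once. The bulk of the work in the proof is not in the high-level strategy, which essentially mirrors BST, but in extracting a structure of size $k = \Omega(\log n)$ in place of the weaker $k = \Theta(\log n / \log\log n)$ used there, and verifying that the scheduling remains feasible.
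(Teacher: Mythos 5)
Your high-level framework matches the paper's: reduce to a spanning tree, find a ``hub'' of size $k=\Omega(\log n)$, and process teams of agents through it in $O(n^2/k^2)$ batches of $O(k)$ rounds each. But the proposal has a genuine gap exactly at the step you yourself identify as carrying the bulk of the work: none of the concrete structures you propose as the hub actually exists, or actually functions as a hub, in a general tree. A long path need not exist (a star, or a spider with short legs, has longest path of bounded length, and ``concatenating'' pieces of a heavy-path decomposition does not produce a path of the tree). A Ramsey clique of size $\tfrac12\log_2 n$ need not exist either (trees are triangle-free), and the independent set that Ramsey gives instead is useless as a hub: it spans no edges, so agents parked on it acquire no acquaintances and cannot even be permuted among themselves, and a generic independent set has no common neighbourhood to fall back on. Note also that the naive dichotomy ``either a long path or a high-degree vertex'' only yields a hub of size $\log n/\log\log n$ (if all degrees are below $\log n$, the depth is only guaranteed to be $\log n/\log\log n$), i.e.\ it reproduces the Benjamini--Shinkar--Tsur bound rather than improving it.

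The missing idea is the \emph{caterpillar}. The paper proves (Proposition~\ref{prop:subcat_logn}, by a short induction: the largest subtree below the root $r$ has at least $(n-1)/\deg(r)$ vertices, and $\log_2[(n-1)/d]+d\ge\log_2 n$) that every $n$-vertex tree contains a caterpillar on at least $\log_2 n$ vertices; this interpolates between the path case and the star case and is exactly what beats the $\log n/\log\log n$ barrier. It then shows (Proposition~\ref{prop:caterpillar}) that an $m$-vertex caterpillar has acquaintance time $O(m)$, by routing teams onto the spine and running the strategy of Lemma~\ref{lem:hamilton}, so the caterpillar is a legitimate hub. A further point your proposal overlooks is diameter control in the routing cost of Claim~\ref{claim:bst_route}: with the \emph{largest} caterpillar as hub, the diameter of the tree is automatically $O(k)$ (any longest path is itself a caterpillar), whereas a Ramsey clique sitting at the end of a long path would force $\Theta(n)$ rounds per batch and ruin the count. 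Without the caterpillar lemma (or an equivalent structural statement), the proposal does not go through.
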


We use the following result from~\cite{bst} (Claim 2.1 in that paper).
\begin{claim}[\cite{bst}]\label{claim:bst_route}
Let $G = (V,E)$ be a tree. Let $S, T \subseteq V$ be two subsets of the vertices of equal
size $k = |S| = |T|$, and let $\ell = \max_{v\in S,u\in T} {\dist(v, u)}$ be the maximal distance between a
vertex in $S$ and a vertex in $T$. Then, there is a strategy of $\ell + 2(k - 1)$ matchings that
routes all agents from $S$ to $T$.
\end{claim}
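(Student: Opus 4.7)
My plan is to prove the claim by induction on $k$, peeling off one source-target pair per step with an amortized cost of two extra matchings per additional pair beyond the first.

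\medskip

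\noindent\emph{Base case} ($k = 1$). The unique $s$--$t$ path in the tree (between the lone vertex $s \in S$ and the lone $t \in T$) has length at most $\ell$, and pushing $s$'s agent one edge at a time along this path uses at most $\ell$ single-edge matchings. This matches the target $\ell + 2(k - 1) = \ell$.

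\medskip

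\noindent\emph{Inductive step.} Let $H$ be the Steiner subtree of $G$ spanned by $S \cup T$; every leaf of $H$ lies in $S \cup T$. Pick such a leaf $v$. If $v \in S \cap T$, the agent at $v$ already occupies a destination, so I freeze it and apply the inductive hypothesis to the pair $(S \setminus \{v\}, T \setminus \{v\})$ on the subtree $H \setminus \{v\}$. The maximum pair-distance does not increase and the number of pairs drops to $k-1$, so induction yields a schedule of length $\ell + 2(k-2)$, which is well within the target.

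Otherwise $v$ lies in exactly one of $S, T$; without loss of generality $v \in S \setminus T$. Let $u$ be the unique neighbor of $v$ in $H$, pick $t^* \in T$ minimizing $\dist(v, t^*)$, and let $P$ be the $v$--$t^*$ path (which is contained in $H$ and has length at most $\ell$). I would invoke the inductive hypothesis on the smaller instance $(S \setminus \{v\}, T \setminus \{t^*\})$ to obtain a schedule $\mathcal{M}$ of length at most $\ell + 2(k-2)$, and then splice into it a walk of $v$'s agent along $P$, using at most two buffer matchings to resolve conflicts with $\mathcal{M}$ near the leaf edge $vu$.

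\medskip

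\noindent\emph{Main obstacle.} The delicate part is showing that two extra rounds always suffice to interleave the push of $v$'s agent along $P$ with the recursive schedule. Because $v$ is a leaf of $H$, only the single edge $vu$ is ``new'' compared to the subtree used in the recursive call, so conflicts are localized: one buffer round to pop $v$'s agent onto $P$ (when edge $vu$ is free), and one to realign the schedules thereafter. Choosing $t^*$ to be the \emph{nearest} target, rather than an arbitrary one, is crucial so that $P$ is short and its internal edges interact with $\mathcal{M}$ only in a controlled way that can be absorbed by the two buffer rounds. Making this interleaving explicit, and ruling out pathological conflicts when multiple paths in $\mathcal{M}$ cross $P$ simultaneously, is the technical heart of the argument.
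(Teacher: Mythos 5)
First, a point of reference: the paper does not prove this claim at all --- it is Claim 2.1 of \cite{bst}, quoted verbatim and used as a black box --- so there is no in-paper argument to compare your proposal against, and it must be judged on its own. Its skeleton is reasonable (induction on $k$ with amortized cost $2$ per pair, peeling off a leaf of the Steiner tree $H$, sending it to its nearest target so that the interior of the chosen path avoids $T$), and the base case and the case $v \in S \cap T$ are fine. But the decisive step is missing, and the heuristic you offer in its place is incorrect.

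The gap is your assertion that, because $v$ is a leaf of $H$, ``only the single edge $vu$ is new compared to the subtree used in the recursive call, so conflicts are localized.'' The localization runs the wrong way: $vu$ is the one edge on which there can be \emph{no} conflict, since the recursive schedule $\mathcal{M}$ never touches it; every other edge of the $v$--$t^*$ path $P$ lies inside $H \setminus \{v\}$, which is precisely where $\mathcal{M}$ is active for up to $\ell + 2(k-2)$ rounds. So the walk of $v$'s agent can collide with $\mathcal{M}$ at each of its up to $\ell$ steps, not just near $vu$. Each collision either forces $v$'s agent to idle for a round --- and nothing you have said bounds the total number of such idle rounds by $2$ --- or else $v$'s agent performs a swap with an agent that $\mathcal{M}$ is in the middle of routing, knocking that agent off its scripted trajectory and voiding the inductive guarantee that it reaches its target. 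Choosing $t^*$ nearest to $v$ keeps $P$ free of other targets, but does nothing to keep it free of the moving agents of $\mathcal{M}$. What a correct proof must supply is a global scheduling argument: a choice of source-to-target assignment whose routing paths are non-crossing in the tree (always arrangeable for set-to-set routing by exchanging destinations), together with a proof that launching the agents one after another with a uniform two-round stagger lets each follow its path without catching up to, or meeting head-on, any other routed agent. That scheduling lemma is the entire content of the claim, and it is exactly the part your write-up defers to ``the technical heart of the argument.''
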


Before proving the main result, we present two simple propositions.  A {\em caterpillar} is a tree in which all vertices are either on or adjacent to a single path (known as the {\em spine} of the caterpillar).

\begin{proposition}\label{prop:caterpillar}
If $T$ is an $n$-vertex caterpillar, then $\AC(T) = O(n)$.
\end{proposition}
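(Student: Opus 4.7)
My plan is a three-phase strategy whose total length is $O(n)$. Write the spine of $T$ as $v_1,\ldots,v_s$ and let $L_i=\{u_{i,1},\ldots,u_{i,d_i}\}$ be the leaves attached to $v_i$, so $n=s+\sum_i d_i$. Set $d_{\max}=\max_i d_i$.

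Phase 1 (spine sweep): apply the strategy of Lemma~\ref{lem:hamilton} to the Hamiltonian path on the spine, using only spine edges, for $2s$ rounds. Leaves remain stationary. Because the lemma guarantees that every spine agent visits every spine vertex, each leaf agent $\alpha_{u_{i,j}}$ (sitting at $u_{i,j}$) becomes acquainted with every spine agent at the moment that spine agent occupies $v_i$. After Phase 1, all spine-spine and spine-leaf pairs are acquainted.

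Phase 2 (parallel star rotation): for $t=1,2,\ldots,d_{\max}$ perform the matching $\{v_i u_{i,t}:d_i\ge t\}$; this is a valid matching in $T$. In the round that $u_{i,t}$ occupies $v_i$ it is adjacent to every other leaf of $v_i$, so after $d_{\max}\le n$ rounds every pair of leaves sharing a spine vertex is acquainted. As a byproduct, whenever both $v_i$ and $v_{i+1}$ host a lifted leaf in the same round those two leaves are also acquainted across the spine edge.

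Phase 3 (cross-star leaves): it remains to acquaint every leaf of $v_i$ with every leaf of $v_k$ for $i\neq k$. My plan is a pipelined hybrid of Phases 1 and 2: run a single extended sweep of length $O(n)$ on the spine while in parallel continuously rotating which leaf each $v_i$ currently hosts, so that lifted leaves ride the spine and meet the current occupants of neighbouring spine positions. By choosing rotation offsets at different spine vertices that are sufficiently mixed, every cross-star pair eventually appears at adjacent spine positions at some round. An alternative would be to invoke Claim~\ref{claim:bst_route} and perform $O(1)$ carefully designed re-routings of all $n$ agents, each of cost $O(n)$.

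The hard part will be Phase 3. A naive implementation reruns the Phase 1 sweep once for each of the $d_{\max}$ choices of which leaves are lifted; this costs $\Theta(s\cdot d_{\max})$ rounds, which can be as large as $\Theta(n^2)$ (consider the case $s\asymp d_{\max}\asymp \sqrt n$). A successful $O(n)$-round strategy must therefore exploit parallelism across the $s$ spine positions within a single long sweep — note that each round's matching can resolve $\Theta(n)$ new pairs and the total number of cross-star pairs is at most $(n-s)^2/2$, giving $O(n)$ as a consistency check against the trivial lower bound $\AC(T)\ge \binom{n}{2}/(n-1)=\Theta(n)$. Verifying that a leaf-rotation schedule can be coordinated across all spine vertices to cover every cross-star pair is the technical heart of the argument.
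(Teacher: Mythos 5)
Your Phases 1 and 2 are fine, but Phase 3 is a plan rather than a proof, and it is precisely where the content of the proposition lies. You correctly compute that the naive leaf-rotation sweep costs $\Theta(s\cdot d_{\max})$, which can be $\Theta(n^2)$, and you then assert that a ``pipelined'' schedule with ``sufficiently mixed rotation offsets'' covers every cross-star pair in $O(n)$ rounds --- but no such schedule is exhibited, and no argument is given that one exists. The counting consistency check (each round can resolve up to $\Theta(n)$ pairs) only shows the bound is not information-theoretically impossible; it does not produce a schedule. Your fallback of ``$O(1)$ re-routings of all $n$ agents via Claim~\ref{claim:bst_route}'' does not work either, since only $s$ agents fit on the spine at a time, so a bounded number of re-routings cannot give every leaf agent a turn at riding the spine when $s=o(n)$.

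The paper's proof avoids the coordination problem entirely by abandoning the leaf/spine dichotomy for the agents: it partitions all $n$ agents into $\lceil n/k\rceil$ teams of size at most $k$ (where $k$ is the spine length), and iteratively routes each team onto the spine via Claim~\ref{claim:bst_route} (cost $O(k)$, since the diameter is $O(k)$) and runs the Lemma~\ref{lem:hamilton} sweep (cost $2k$). The key observation is that in a caterpillar every vertex is on or adjacent to the spine, so a team member who visits every spine vertex becomes acquainted with \emph{every} other agent, wherever that agent is parked --- in particular all cross-star leaf pairs are handled the moment either leaf's team takes its turn on the spine. The total cost is $\lceil n/k\rceil\cdot O(k)=O(n)$. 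If you want to salvage your outline, replacing Phase 3 with this team-rotation argument is the missing idea.
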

\begin{proof}
Let $k$ denote the number of vertices in the spine of $T$, and note that the diameter of $T$ is $O(k)$.  Partition the $n$ agents into $\ceil{n/k}$ teams, each of size at most $k$.  We iteratively route a team onto the spine, apply the strategy in Lemma~\ref{lem:hamilton}, and repeat until all teams have traversed the spine.  When a team traverses the spine, all team members meet all other agents in the graph, so this strategy suffices to acquaint all pairs of agents.  By Claim~\ref{claim:bst_route} and Lemma~\ref{lem:hamilton}, each iteration can be completed in $O(k)$ rounds, so the total number of rounds needed is $O(n)$.
\end{proof}

\begin{proposition}\label{prop:subcat_logn}
If $T$ is an $n$-vertex tree, then $T$ contains a caterpillar on at least $\log_2 n$ vertices.
\end{proposition}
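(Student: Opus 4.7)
The plan is to root $T$ at an arbitrary vertex and induct on subtree sizes. For each vertex $v$ in the rooted tree, I let $T_v$ denote the subtree hanging below $v$ and define $f(v)$ to be the maximum number of vertices in a caterpillar subgraph of $T_v$ whose spine has $v$ as one endpoint. My goal is to prove by induction on $|T_v|$ that $f(v) \ge \log_2 |T_v|$; applying this at the root will yield the statement.

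The recursion for $f$ is straightforward to set up. The base case is $f(v) = 1$ when $v$ is a leaf. Otherwise, if $v$ has children $c_1, \ldots, c_k$ in the rooted tree, then an optimal caterpillar rooted at $v$ will pick some child $c_{i^*}$ along which to extend the spine and attach the remaining $k-1$ children to $v$ as leaves; this is legal because any two children of $v$ are non-adjacent in the tree $T$ (otherwise one would get a cycle through $v$). Thus
\[
f(v) \;=\; k \,+\, \max_{1 \le i \le k} f(c_i).
\]

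Applying the inductive hypothesis $f(c_i) \ge \log_2 |T_{c_i}|$ together with pigeonhole, which gives $\max_i |T_{c_i}| \ge (|T_v|-1)/k$, I obtain
\[
f(v) \;\ge\; k + \log_2 \frac{|T_v| - 1}{k} \;=\; \log_2(|T_v| - 1) + \bigl(k - \log_2 k\bigr).
\]
So the desired bound $f(v) \ge \log_2 |T_v|$ reduces to the inequality $k - \log_2 k \ge \log_2\!\bigl(|T_v|/(|T_v|-1)\bigr)$. The right-hand side is at most $1$ whenever $|T_v| \ge 2$, while the function $k - \log_2 k$ is at least $1$ for every integer $k \ge 1$ (with equality at $k = 1$ and $k = 2$), so the induction closes.

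There is no genuine obstacle in the plan: the branching cost at each vertex, captured by $k - \log_2 k$, is always enough to cover the logarithmic increment from $|T_v|-1$ up to $|T_v|$. The only minor point to watch is the leaf base case and the fact that the caterpillar we construct recursively is genuinely a subgraph of $T$, which is automatic from the non-adjacency of siblings noted above.
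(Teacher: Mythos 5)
Your proof is correct and is essentially the same as the paper's: both root the tree, descend into the largest child subtree (pigeonhole giving at least $(|T_v|-1)/k$ vertices), append the root and its remaining $k-1$ children to the caterpillar, and close the induction via the inequality $k - \log_2 k \ge \log_2\bigl(|T_v|/(|T_v|-1)\bigr)$. Your recursive formulation via $f(v)$ only needs the inequality $f(v) \ge k + \max_i f(c_i)$ rather than the claimed equality, but that is exactly what your argument uses, so nothing is lost.
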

\begin{proof}
We use induction on $n$ to prove the following stronger statement: for every vertex $r$, the tree $T$ contains a caterpillar on at least $\log_2 n$ vertices, in which $r$ is an endpoint of the spine.  When $n \le 2$ the claim is trivial, so suppose otherwise.  View $T$ as being rooted at $r$.  Let $d = \deg(r)$, and consider the $d$ subtrees rooted at the children of $r$.  The largest of these subtrees, say $T'$, contains at least $(n-1)/d$ vertices.  The induction hypothesis guarantees a caterpillar in $T'$ having at least $\log_2 [(n-1)/d]$ vertices, in which one endpoint of the spine is adjacent to $r$.  Appending $r$ and its other children to this caterpillar yields a caterpillar of the desired form in $T$ with at least $\log_2 [(n-1)/d] + d$ vertices; as $\log_2 [(n-1)/d] + d \ge \log_2 n$, this completes the proof.
\end{proof}

Theorem~\ref{thm:upper_improved} now follows easily, using the same approach as in~\cite{bst}, Theorem 5.5.

\begin{proof}[Proof of Theorem~\ref{thm:upper_improved}]
By monotonicity of $\AC(G)$, we may suppose $G$ is a tree.  Let $T$ be the largest caterpillar contained in $G$, and let $k = \size{V(T)}$.  (Note that $G$ has diameter $O(k)$.)  Partition the agents into $\ceil{2n/k}$ teams, each of size at most $k/2$.  For each pair of distinct teams, route both teams onto $T$ and apply the strategy in Proposition~\ref{prop:caterpillar}; this ensures that any two agents become acquainted.  There are $O(n^2/k^2)$ pairs of teams, and (by Claim~\ref{claim:bst_route} and Proposition~\ref{prop:caterpillar}) we spend $O(k)$ rounds for each pair, so the entire process lasts for $O(n^2/k)$ rounds.  By Proposition~\ref{prop:subcat_logn} we have $k \ge \log_2 n$, which completes the proof.
\end{proof}

\section{Addendum}

There have been several developments on this problem since the submission of this paper. In this note, we proved a conjecture from~\cite{bst} on the acquaintance time $\AC(G)$ of the random graph $G \in G(n,p)$. 
Moreover, we showed that for any $n$-vertex graph $G$, we have $\AC(G) = O(n^2/\log n )$. 
This general upper bound was recently improved in~\cite{AS}: they show that $\AC(G) = O(n^{3/2})$, which was conjectured in~\cite{bst} and is tight up to a multiplicative constant. 

In~\cite{MP}, the acquaintance time of a random subgraph of a random geometric graph $G \in G(n,r,p)$ is studied. (In $G \in G(n,r,p)$, $n$ vertices are chosen uniformly at random and independently from $[0,1]^2$, and two vertices are adjacent with probability $p$ if the Euclidean distance between them is at most $r$.)
Asymptotic results for the acquaintance time of $G \in G(n,r,p)$ for a wide range of $p=p(n)$ and $r=r(n)$ are presented. In particular, it is shown that with high probability $\AC(G) = \Theta(r^{-2})$ for $G \in G(n,r,1)$, the classic random geometric graph, provided that $\pi n r^2 - \ln n \to \infty$ (that is,  above the connectivity threshold). For the percolated random geometric graph $G \in G(n,r,p)$, it follows that with high probability $\AC( G ) = O(r^{-2} p^{-1} \ln n)$, provided that $\pi n r^2 p \geq K \ln n$ for some large constant $K>0$ and $p < 1-\eps$ for some $\eps>0$. Moreover, a matching lower bound for dense random percolated graphs is presented, which also implies that with high probability $K_n$ cannot be covered with $o(r^{-2} p^{-1} \ln n)$ copies of a random geometric graph $G \in G(n,r,p)$, provided that $pr \geq n^{-1/2+\eps}$ and $p < 1-\eps$ for some $\eps>0$.

\end{document}